\newcommand{\sqn}{\sqrt{n}}
\newcommand{\baa}{\bar{a}}
\newcommand{\orp}{\textup{ord}_p}
\newcommand{\ZZ}{{\mathbb Z}}
\begin{document}
\title{Difference Sets with Few Character Values}
\author{Tao Feng    \and
        Sihuang Hu  \and
        Shuxing Li  \and\\
        Gennian Ge
}
\institute{
Tao Feng \and Sihuang Hu  \and Shuxing Li  \at
Department of Mathematics,
Zhejiang University,
Hangzhou, 310027, Zhejiang, China\\\\
Gennian Ge \at
School of Mathematical Sciences,
Capital Normal University,
Beijing, 100048, China\\\\
Tao Feng \at
\email{tfeng@zju.edu.cn}\\\\
Sihuang Hu \at
\email{husihuang@zju.edu.cn}\\\\
Shuxing Li \at
\email{sxli@zju.edu.cn}\\\\
Gennian Ge \at
\email{gnge@zju.edu.cn}
}

\date{Received: date / Accepted: date}

\maketitle

\begin{abstract}
The known families of difference sets can be subdivided into three classes:
difference sets with Singer parameters, cyclotomic difference sets,
and difference sets with gcd$(v,n)>1$. It is remarkable that all the known
difference sets with gcd$(v,n)>1$ have the so-called character divisibility
property. In 1997, Jungnickel and Schmidt posed the problem of constructing difference
sets with gcd$(v,n)>1$ that do not satisfy this property.
In an attempt to attack this problem, we use difference sets with three
nontrivial character values as candidates, and get some necessary
conditions.
\keywords{Association schemes \and Character divisible property \and Character values \and Difference sets}
 \subclass{MSC 05E30 \and MSC 05B10}
\end{abstract}

\section{Introduction}
\label{section:introduction}
Let $G$ be a finite abelian group of order $v$ and exponent $m$.
A subset $D$ of size $k$ in $G$ is called a
$(v, k,\lambda)$ difference set if each nonidentity element of $G$ can be represented
as $d_1d_2^{-1}$, $d_1, d_2 \in D$ in exactly $\lambda$ ways.
The order of $D$ is defined to be $n = k-\lambda$.
For a subset $A$ of $G$, we set $A^{(-1)}=\{g^{-1}\,|\, g\in A\}$;
also we use the same $A$ to denote the group ring element
$\sum_{g\in A}g\in \ZZ[G]$. Then, it is not hard to see that a
$k$-subset $D$ of a group $G$ of order $v$ is a $(v,k,\lambda)$ difference
set in $G$ if and only if it satisfies the following
equation in the group ring $\ZZ[G]$:
$$
DD^{(-1)}=n+\lambda G.
$$

Besides group rings, character theory is another very fruitful tool in the study of difference
sets. For a finite abelian group $G$,  we use $\widehat{G}$
to denote its character group, and $\chi_0$ the principal character.
The \textbf{Fourier inversion formula} below will be used frequently.

\begin{lemma}\label{inversion formula}
Let $G$ be an abelian group of order $v$. If $A=\sum_{g\in G}a_g
g\in \ZZ[G]$, then
$$
a_h=\frac{1}{v}\sum_{\chi\in\widehat G}\chi(A) \chi (h^{-1})
$$
for all $h\in G$, where $\chi(A)=\sum_{g\in A}a_g\chi(g)$.
\end{lemma}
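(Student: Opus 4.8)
The plan is to evaluate the right-hand side directly and collapse it using the first orthogonality relation for characters. First I would substitute the definition $\chi(A)=\sum_{g\in G}a_g\chi(g)$ into the expression $\frac1v\sum_{\chi\in\widehat G}\chi(A)\,\chi(h^{-1})$ and interchange the two finite summations, which is legitimate since both index sets are finite. Using that each $\chi$ is a group homomorphism, so $\chi(g)\chi(h^{-1})=\chi(gh^{-1})$, this rewrites the right-hand side as $\frac1v\sum_{g\in G}a_g\sum_{\chi\in\widehat G}\chi(gh^{-1})$.

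The key ingredient is then the orthogonality relation $\sum_{\chi\in\widehat G}\chi(x)=v$ when $x=1_G$ and $\sum_{\chi\in\widehat G}\chi(x)=0$ otherwise. I would establish this in two cases. If $x=1_G$, every summand equals $1$ and the sum has $|\widehat G|=|G|=v$ terms, using the standard fact that a finite abelian group is isomorphic to its character group. If $x\neq 1_G$, pick $\psi\in\widehat G$ with $\psi(x)\neq 1$ — such a $\psi$ exists because $\widehat G$ separates the points of $G$ — and observe that $\chi\mapsto\psi\chi$ is a bijection of $\widehat G$ onto itself, so $\psi(x)\sum_{\chi\in\widehat G}\chi(x)=\sum_{\chi\in\widehat G}(\psi\chi)(x)=\sum_{\chi\in\widehat G}\chi(x)$; hence $(\psi(x)-1)\sum_{\chi\in\widehat G}\chi(x)=0$, and since $\psi(x)\neq 1$ the sum vanishes.

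Substituting this back, the inner sum $\sum_{\chi\in\widehat G}\chi(gh^{-1})$ equals $v$ precisely when $gh^{-1}=1_G$, i.e.\ $g=h$, and is $0$ for all other $g$. Therefore $\frac1v\sum_{g\in G}a_g\sum_{\chi\in\widehat G}\chi(gh^{-1})=\frac1v\cdot a_h\cdot v=a_h$, which is the claimed formula.

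I do not expect a genuine obstacle here: the only nontrivial inputs are the two structural facts that $|\widehat G|=|G|$ and that $\widehat G$ separates points of $G$, and both follow at once from the decomposition of $G$ into cyclic factors together with the explicit description of the characters of a finite cyclic group. The rest is a bookkeeping interchange of finite sums, so the write-up should be short.
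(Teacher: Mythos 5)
Your argument is correct and is exactly the standard proof via the orthogonality relation $\sum_{\chi\in\widehat G}\chi(x)=v\,\delta_{x,1_G}$; the paper itself states this lemma as a well-known fact and supplies no proof, so there is nothing to diverge from. The only cosmetic point is that the paper's notation $\chi(A)=\sum_{g\in A}a_g\chi(g)$ should be read as your $\sum_{g\in G}a_g\chi(g)$, which you have interpreted correctly.
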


One useful consequence of the inversion formula is as follows. Let $G$ be
an abelian group of finite order, and let $A$ and $B$ be two
elements of $\ZZ[G]$. Then $A=B$ if and only if
$\chi(A)=\chi(B)$ for all characters $\chi$ of $G$. The next
result is a standard characterization of difference sets by using
their character values.

\begin{proposition}
Let $G$ be an abelian group of order $v$ and $\chi \in \widehat{G}$.
Let $k$ and $\lambda$ be positive integers satisfying
$k(k-1)=\lambda (v-1)$. Then a $k$-subset $D$ of $G$ is a
$(v,k,\lambda)$ difference set in $G$ if and only if
$$
\chi(D)\overline{\chi(D)}= \left\{
                            \begin{array}{lll}
                            n, & \mbox{if} & \chi \ne \chi_0,\\
                            k^2, & \mbox{if} & \chi = \chi_0.
                            \end{array} \right.
$$
\end{proposition}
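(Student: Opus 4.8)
The plan is to reduce the statement to the group ring identity $DD^{(-1)} = n + \lambda G$ already recorded in the introduction, and then to translate that identity into a condition on character values using the consequence of the Fourier inversion formula (Lemma~\ref{inversion formula}) noted above, namely that two elements $A, B \in \ZZ[G]$ are equal if and only if $\chi(A) = \chi(B)$ for every $\chi \in \widehat{G}$.

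First I would record that for any character $\chi$ of a finite abelian group, each value $\chi(g)$ is a root of unity, so $\chi(g^{-1}) = \chi(g)^{-1} = \overline{\chi(g)}$; extending linearly over $\ZZ[G]$ gives $\chi(D^{(-1)}) = \overline{\chi(D)}$, and hence $\chi(DD^{(-1)}) = \chi(D)\chi(D^{(-1)}) = \chi(D)\overline{\chi(D)}$ for every $\chi$. This produces the left-hand side of the asserted equality. Next I would compute the character values of $n + \lambda G \in \ZZ[G]$. For a nonprincipal $\chi$ we have $\chi(G) = \sum_{g \in G}\chi(g) = 0$, so $\chi(n + \lambda G) = n$. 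For $\chi = \chi_0$ we have $\chi_0(G) = v$, so $\chi_0(n + \lambda G) = n + \lambda v$; substituting $n = k - \lambda$ and using the hypothesis $k(k-1) = \lambda(v-1)$ yields $n + \lambda v = k - \lambda + \lambda v = k + \lambda(v-1) = k + k(k-1) = k^2$.

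Finally I would assemble the pieces: by the group ring criterion quoted in the introduction, the $k$-subset $D$ is a $(v,k,\lambda)$ difference set if and only if $DD^{(-1)} = n + \lambda G$, and by the inversion-formula consequence this equality holds if and only if $\chi(DD^{(-1)}) = \chi(n + \lambda G)$ for all $\chi \in \widehat{G}$; combining this with the two computations above gives exactly the stated case distinction for $\chi(D)\overline{\chi(D)}$. I do not expect any genuine obstacle here; the only points that require a little care are the identity $\chi(D^{(-1)}) = \overline{\chi(D)}$ (which uses that character values are roots of unity) and the use of the parameter relation $k(k-1) = \lambda(v-1)$ to obtain the value $k^2$ at the principal character. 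One may also remark that, since $D$ is assumed to be a $k$-subset, $\chi_0(D) = k$ is automatic, so the principal-character line of the condition is consistent in the ``only if'' direction and carries no extra information in the ``if'' direction beyond fixing the sign of $\chi_0(D)$.
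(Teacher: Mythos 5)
Your proof is correct and is exactly the standard argument the paper implicitly relies on (the proposition is stated there without proof, as a known fact): pass from the group ring identity $DD^{(-1)}=n+\lambda G$ to character values via $\chi(D^{(-1)})=\overline{\chi(D)}$ and the equality criterion from the Fourier inversion formula, with the parameter relation $k(k-1)=\lambda(v-1)$ supplying the value $k^2$ at $\chi_0$. No gaps; your closing remark that the principal-character condition is automatic for a $k$-subset is also accurate.
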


For more background on difference sets, the interested reader may refer to \cite{BJL,Pott}.

The known families of difference sets can be subdivided into three classes:
difference sets with Singer parameters, cyclotomic difference sets,
and difference sets with gcd$(v,n)>1$.
There are five known families of difference sets with gcd$(v,n)>1$,
namely Hadamard difference sets, the McFarland and the Spence family,
a series similar to Spence difference sets discovered by Davis and Jedwab \cite{DJed},
and a series generalizing Hadamard difference sets found by Chen \cite{Chen}.
We say a difference set $D$ has \textbf{character divisibility property} if
$\sqn\,|\,\chi(D)$ for each nonprincipal character $\chi$ of $G$.
It is  remarkable that all the known difference sets with gcd$(v,n)>1$
have this property. So it is natural to ask the following problem, which was posed by
Jungnickel and Schmidt in their survey paper \cite{JS}.\\

\noindent\textbf{Research Problem:} Construct difference sets with gcd$(v,n)>1$
that do not have the character divisibility property.\\

This current project makes an attempt to attack this problem.
For a difference set $D$, we define
$$
X=X(D)=\{ \chi(D)\mid \chi \in \widehat{G}, \chi \ne \chi_0\},
$$
which is the set of character values $\chi(D)$, where $\chi$ ranges
over all the nonprincipal characters of $G$. We will use difference
sets with $|X|=3$ as candidates and derive some necessary
conditions. With the aid of computer, some infinite classes of plausible
parameters satisfying all the necessary conditions has been found.
Besides, we find several classes of parameters which meet almost all
the necessary conditions. We list these parameters as a supportive
evidence for the existence of difference sets without character
divisibility property.

Related to this paper, there are  some similar results on graphs.
In \cite{BM1,BM2,BM3}, Bridges and Mena studied the multiplicative design,
which is related to a family of three eigenvalue graphs.
Ma \cite{Ma1} considered the subset of a group with few character values under
the name of polynomial addition sets.
Overall, these works are particular cases of the problem of graphs
with few eigenvalues, see \cite[Chapter 15]{BH}.

This paper is organized as follows. In Section~\ref{section:necessay conditions},
we obtain some necessary conditions for the existence of difference sets
with exactly three nontrivial character values.
Then, according to Lemma~\ref{lemma:only one prime divisor}, we split
our discussion into three cases, which are handled separately
in Sections~\ref{section:d=p}-\ref{section:d=-1}.
Another three special cases are considered in Section~\ref{section:special cases}.
A brief conclusion will be given in the last section.

\section{Necessary Conditions}\label{section:necessay conditions}
In the following, we will always assume that $\chi(D)$ takes
exactly three nontrivial character values, denoted by $a,b$ and $c$, when $\chi$
ranges over all the nonprincipal characters of $G$.

Here we fix some notation. Write $\ZZ_m^*$ for the multiplicative
group of units in $\ZZ_m$. For each $t \in \ZZ_m^*$, define
$\sigma_t\in \textup{Gal}(Q(\xi_m)/Q)$ by $\sigma_t(\xi_m)=\xi_m^t$,
and every element in $\textup{Gal}(Q(\xi_m)/Q)$ is of this form. For
each $\chi\in\widehat{G}$, define $\chi^t(x)=\sigma_t(\chi(x))$ for
all $x\in G$, which is also a character of $G$. For a subset $U$ of
$\widehat{G}$, set $U^{(t)}=\{\chi^t\,|\,\chi\in U\}$. We define
$$
U_z=\{\chi\in \widehat{G}\,\backslash\{\chi_0\}\,|\,\chi(D)=z\}
$$
for each $z$ in $\{a,b,c\}$. Then, $U_a$, $U_b$ and $U_c$ form a partition of
nonprincipal characters of $G$. For each character $\chi$ of $G$,
$\chi^{-1}$ is also a character of $G$, and $\chi^{-1}(D)=\sigma_{-1}(\chi(D))$,
therefore we have
$$
\{\sigma_{-1}(a),\sigma_{-1}(b),\sigma_{-1}(c)\}=\{a,b,c\}.
$$
Then, at least one element of $\{a,b,c\}$ is fixed by
$\sigma_{-1}$. Without loss of generality, we assume that
$c=\sigma_{-1}(c)$, i.e., $c$ is a real number. Then, we see that
$b=\sigma_{-1}(a)=\bar{a}$, and $U_a^{(-1)}=U_b$.

Let $\chi$ be a character in $U_c$, that is $\chi(D)=c$.
Together with $\chi(D)\overline{\chi(D)}=n$, we obtain $c=\pm\sqrt{n}$. By taking
$G\setminus D$ instead of $D$, which is also a difference set, we may assume that $c=\sqrt{n}$.
Clearly neither of $a,\bar{a}$ is equal to $\pm \sqrt{n}$.
Since $\textup{Gal}(Q(\xi_m)/Q)$ is abelian, we have
$$
\sigma_{-1}(\chi^t(D))=\sigma_{t}\sigma_{-1}(\chi(D))=\chi{}^t(D),
$$
for each $t\in \ZZ_m^*$. It now follows that
$\sigma_t(\sqrt{n})=\chi^t(D)=\sqrt{n}$ for each $t\in \ZZ_m^*$.
Hence the number $\sqrt{n}$ is an integer and $U_c^{(t)}=U_c$ for
each $t\in \ZZ_m^*$.

Similarly, it is easy to see that the subgroup $T:=\{t \in \ZZ_m^*|
\sigma_t(a)=a\}$ has index $2$ in $\ZZ_m^*$ and
$U_a^{(t)}=U_a$, $U_b^{(t)}=U_b$ for each $t\in T$.
Consequently, for each $\chi \in \widehat{G}$ and $t \in T$,
we have $\chi(D)=\chi^t(D)=\chi(D^{(t)})$. By the inversion
formula, we infer that $D$ is fixed by $T$, namely, $D^{(t)}=D$ for each $t \in T$.
On the other hand, we see that $Q(a)$ is a quadratic subfield of $Q(\xi_m)$,
by the fundamental theorem of Galois theory.
Then, $Q(a)=Q(\sqrt{d})$ for some squarefree integer $d$.
Now we recall some well-known results about quadratic and cyclotomic fields, which
can be found in any standard textbook on algebraic number theory, e.g. \cite{IrRo}.

The ring of algebraic integers of $Q(\sqrt{d})$ is $\ZZ[1,\beta]$ with
$$
\beta =\left\{\begin{array}{ll}
                \sqrt{d}, &\textup{if } d\equiv 2,3\pmod{4}, \\
                (-1+\sqrt{d})/2,&\textup{if } d\equiv 1\pmod{4},
              \end{array}\right.
$$
and the discriminant of $Q(\sqrt{d})$ is
$$
\Delta_d =\left\{\begin{array}{ll}
                4d, &\textup{if } d\equiv 2,3\pmod{4}, \\
                 d,&\textup{if } d\equiv 1\pmod{4}.
              \end{array}\right.
$$
The discriminant of $Q(\xi_m)$ is equal to
$$
(-1)^{\phi(m)/2}\frac{m^{\phi(m)}}{\prod_{p|m}p^{\phi(m)/(p-1)}},
$$
which has the same prime divisor with $m$; unless $m\equiv 2\pmod{4}$ in which case it has
the same odd prime divisor with $m$. Because a prime $p$ ramifies in a field if and only
if $p$ divides the discriminant of this field,
we see that each prime divisor of $\Delta_d$ is a divisor of $m$.
It follows that $d|m$, and $4|m$ when $\Delta_d$ is even.

For each prime $p\,|m$, we denote the Sylow $p$-subgroup of $G$ by $G_p$, and write
$G=G_p\times W,$
where $W$ is a subgroup of order $w$ and $w$ is coprime to $p$. We also
assume that $|G_p|=p^s$ for some integer $s$. Now we come to our first
result, which is about the discriminant of the quadratic field $Q(a)$.

\begin{lemma}\label{lemma:only one prime divisor}
The discriminant $\Delta_d$ of the quadratic field $Q(a)$ has only one prime divisor.
Moreover, we have
$$
\Delta_d =\left\{\begin{array}{lll}
                -p, &\textup{if } d=-p,\ \textup{where}\  p\  \textup{is a prime}\ \equiv 3\pmod 4, \\
                -8,&\textup{if } d=-2,\\
                -4,&\textup{if } d=-1,
              \end{array}\right.
$$
and the above three cases are the only possibilities.
\end{lemma}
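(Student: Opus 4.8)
The plan is to argue by contradiction: assuming $\Delta_d$ has two or more distinct prime divisors, I will deduce that $v$ divides $k-\sqrt n$, which is impossible. First note that the quadratic field $Q(a)=Q(\sqrt d)$ is imaginary, i.e.\ $d<0$, since complex conjugation is a nontrivial automorphism of it (we have $b=\bar a\neq a$) whereas it fixes every real quadratic field; in particular $|\Delta_d|>1$, so $\Delta_d$ has at least one prime divisor and the claim is that it has exactly one.

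Assume then that $\Delta_d$ has two distinct prime divisors, and fix an arbitrary prime $\ell\mid v=|G|$. Write $G=G_\ell\times W_\ell$ with $G_\ell$ the Sylow $\ell$-subgroup and $\ell\nmid|W_\ell|$, let $\pi\colon G\to G_\ell$ be the projection, and consider the characters of $G$ trivial on $W_\ell$: these are exactly the $\psi\circ\pi$ with $\psi\in\widehat{G_\ell}$, each of $\ell$-power order, so $(\psi\circ\pi)(D)\in Q(\xi_{\ell^{t}})$ for a suitable $t$. The crucial point is that $\sqrt d\notin Q(\xi_{\ell^{t}})$: since $|\Delta_d|$ is not a power of $\ell$, some prime $q\neq\ell$ divides $\Delta_d$ and hence ramifies in $Q(\sqrt d)$, so if $Q(\sqrt d)$ were contained in $Q(\xi_{\ell^{t}})$ then $q$ would ramify in $Q(\xi_{\ell^{t}})$ as well, contradicting that the discriminant of $Q(\xi_{\ell^{t}})$ is a power of $\ell$. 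Therefore no $\psi\circ\pi$ takes on $D$ the value $a$ or $b=\bar a$ (both generators of $Q(\sqrt d)$), so $(\psi\circ\pi)(D)=\sqrt n$ for every nonprincipal $\psi$. Setting $\pi(D)=\sum_{g\in D}\pi(g)\in\ZZ[G_\ell]$ we get $\psi(\pi(D))=k$ for $\psi=\chi_0$ and $\psi(\pi(D))=\sqrt n$ otherwise, so Lemma~\ref{inversion formula} applied inside $G_\ell$ shows that the coefficient of every nonidentity element of $G_\ell$ in $\pi(D)$ equals $(k-\sqrt n)/|G_\ell|$; this is a nonnegative integer, whence $|G_\ell|\mid k-\sqrt n$.

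As $\ell$ was an arbitrary prime divisor of $v$ and $v=\prod_{\ell\mid v}|G_\ell|$, it follows that $v\mid k-\sqrt n$. But $\sqrt n$ is a positive integer by the discussion preceding the lemma, and $\sqrt n\le\sqrt k<k<v$ since $n=k-\lambda\le k$ and $D\neq G$; hence $0<k-\sqrt n<v$, a contradiction. Therefore $\Delta_d$ has exactly one prime divisor.

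It remains to list the possibilities. Here $d<0$ is squarefree and $\Delta_d$ is $d$ or $4d$. If $d\equiv1\pmod4$ then $\Delta_d=d$ is a squarefree prime power, forcing $d=-p$ for a prime $p$, and $-p\equiv1\pmod4$ gives $p\equiv3\pmod4$. If $d\equiv2,3\pmod4$ then $\Delta_d=4d$ is divisible by $4$, hence---being a prime power---a power of $2$, and since $d$ is squarefree this leaves only $d=-1$ (so $\Delta_d=-4$) or $d=-2$ (so $\Delta_d=-8$); these are the three cases stated. The main obstacle is the ramification argument in the second paragraph: that is the sole place where the hypothesis ``$\Delta_d$ has two prime divisors'' is used (to ensure $|\Delta_d|$ is not a power of $\ell$, even for $\ell\mid\Delta_d$), and it is what forces the projection of $D$ onto each Sylow subgroup to have only one nonprincipal character value; the inversion bookkeeping and the closing case analysis are routine.
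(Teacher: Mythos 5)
Your proof is correct and rests on the same key mechanism as the paper's: a prime dividing $\Delta_d$ ramifies in $Q(\sqrt{d})$ but not in a cyclotomic field whose discriminant it does not divide, which forces the relevant characters to take the value $\sqrt{n}$ on $D$; Fourier inversion then converts this into a divisibility statement that contradicts $0<k-\sqrt{n}<v$. The only real difference is dual bookkeeping in how that divisibility is assembled. The paper, for each prime $p\mid\Delta_d$, looks at characters principal on the Sylow $p$-subgroup (i.e., works in the quotient $G/G_p$) and obtains $w\mid(k-\sqrt{n})$ with $w=v/p^s$, then combines the two primes of $\Delta_d$ by coprimality of $p^s$ and $q^r$. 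You instead, for each prime $\ell\mid v$, look at the characters factoring through $G_\ell$ and obtain $|G_\ell|\mid(k-\sqrt{n})$, using the assumed second prime divisor of $\Delta_d$ to guarantee $Q(\sqrt{d})\not\subseteq Q(\xi_{\ell^t})$; multiplying over all $\ell$ gives the same conclusion $v\mid(k-\sqrt{n})$. Your closing case analysis, which uses $d<0$ (equivalent to the paper's observation that $\sigma_{-1}$ must act nontrivially on $Q(a)$) to rule out $d=2$ and to force $p\equiv 3\pmod 4$, matches the paper's. One minor remark: the paper's intermediate statement $w\mid(k-\sqrt{n})$ is established unconditionally for the single prime $p\mid\Delta_d$ and is reused later as necessary condition (3), whereas your intermediate divisibilities live only inside the contradiction hypothesis; this costs nothing for the lemma itself.
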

\begin{proof}
Take any prime $p$ dividing $\Delta_d$, and assume that $p^s||v$. For each
nonprincipal character $\chi$ which is principal on $G_p$,
$\chi(D)$ is in the field $Q(\xi_{\textup{ord}(\chi)})$ whose discriminant is coprime to $p$.
It follows that $\chi(D)=\sqrt{n}$. Using the inversion
formula, we can check that the homomorphic image of $D$ in $\bar{G}=G/G_p$ is
\begin{equation}\label{hom_1}
 \bar{D}=\sqrt{n}+\frac{k-\sqrt{n}}{w}\bar{G},
\end{equation}
where $w=v/p^s$. It follows that
$$
w|(k-\sqrt{n}),
$$
equivalently, $v|p^s(k-\sqrt{n}).$ If there is another prime divisor
$q$ of $\Delta_d$, then $v|{q^r}(k-\sqrt{n})$ with $q^r||v$.
Because gcd$(p^s,q^r)=1$, $v|(k-\sqrt{n})$ which is false.
Therefore $\Delta_d$ has only one prime divisor.
It follows that either $\Delta_d=\pm p$
for an odd prime $p$ or $\Delta_d=\pm 2^r$ for some integer $r\geq
2$. Correspondingly, we have
\begin{itemize}
  \item[(a)] $d=p^*=(\frac{-1}{p})p$, or
  \item[(b)] $d\in\{-1,-2,2\}$.
\end{itemize}
In case (a), we have $p\equiv 3\pmod{4}$, since
$\sigma_{-1}(\sqrt{p^*})=(\frac{-1}{p})\sqrt{p^*}=-\sqrt{p^*}$.
In case (b), $d=2$ does not occur, since $\sqrt{2}=\xi_8+\xi_8^7$ is fixed
by $\sigma_{-1}$. Hence we are only left with the three cases listed in this lemma.
\qed
\end{proof}

\noindent\textbf{Remark:} When $p=2$, the assertion $v|2^s(k-\sqrt{n})$ in
the above proof can be improved.
Let $N$ be a subgroup of order $2^{s-t}$ such that
the Sylow 2-subgroup of $\bar{G}=G/N$ is elementary abelian. Let $rk_2(G)$ denote
the maximum possible integer for such $t$.
Then the same argument as above will give $\bar{D}=\sqn+\frac{k-\sqn}{v/2^{s-rk_2(G)}}\bar{G}$.
It now follows  $v|(2^{s-rk_2(G)}(k-\sqrt{n}))$.\\

When $a$ is a pure imaginary number, we have the following result.

\begin{lemma}\label{lemma:a is purely imaginary}
If $a+\baa=0$, then $d=-1.$
\end{lemma}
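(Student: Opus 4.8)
The plan is to determine $a$ up to sign and then simply read off the quadratic field $Q(a)$. Two facts about $a$ are already available: first, $a\overline{a}=n$, since $D$ is a difference set and $a=\chi(D)$ for any $\chi\in U_a$, by the character-value characterization of difference sets stated above; second, $\overline{a}=-a$, which is precisely the hypothesis $a+\baa=0$ combined with the normalization $b=\overline{a}$ fixed earlier in this section. Multiplying these two relations gives $a^{2}=-a\overline{a}=-n$.

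Next I would invoke the fact, established just before Lemma~\ref{lemma:only one prime divisor}, that $\sqn$ is a rational integer; equivalently, $n=\sqn^{2}$ is a perfect square. Then $a^{2}=-n=-\sqn^{2}$, so $a=\pm\sqn\sqrt{-1}$ and hence $Q(a)=Q(\sqrt{-1})$. Since by definition $d$ is the squarefree integer with $Q(a)=Q(\sqrt{d})$, this forces $d=-1$, which is the claim. Note that this direction does not use Lemma~\ref{lemma:only one prime divisor} at all; what it does genuinely use is that $n$ is a square, so it rests on the earlier normalization $c=\sqn$ being real and therefore a rational integer.

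There is no real obstacle here: the proof is a short chain of deductions from facts already in place, namely $a\overline a=n$, $b=\overline a$, $\sqn\in\ZZ$, and $[Q(a):Q]=2$. The one point to keep in mind is that the hypothesis must be used in its strong form $\overline a=-a$ — together with $a\overline a=n>0$ this pins down $a^{2}=-n$ exactly, rather than merely $a^{2}<0$ — and that the argument would collapse if $n$ were not a perfect square, which is exactly why the rationality of $c=\sqn$ is the essential ingredient.
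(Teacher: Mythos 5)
Your argument is correct and is essentially the paper's own proof, just written out in more detail: the paper likewise deduces $a=\pm i\sqn$ from $a+\baa=0$ and $a\baa=n$, and concludes $Q(a)=Q(\sqrt{-1})$, hence $d=-1$. Your observation that the rationality of $\sqn$ (established earlier) is the ingredient making $Q(a)=Q(\sqrt{-1})$ rather than $Q(\sqrt{-n})$ is a fair and accurate gloss on the same argument.
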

\begin{proof}
From $a+\baa=0$ and $a\baa=n$, it follows that $a=\pm i\sqn$.
So we obtain $Q(a)=Q(\sqrt{-1}),$ then $d=-1$.
\qed\end{proof}

For convenience, we introduce some other notations:
$\Delta = 2\sqrt{n}-a-\bar{a},$
$\Omega = (v(\sqrt{n}-a))/((a-\bar{a})\Delta),$
and
$R=(k-\sqrt{n})/\Delta.$
Write $D=\sum_{g\in G}d_gg$ and
$D^{(-1)}=\sum_{g\in G}d_g'g$, with each of $d_g,d_g'$
being $0$ or $1$. From the inversion formula, we have the
following equations for each element $g\in G$:
\begin{eqnarray*}
vd_g&=&ag^{-1}(U_a)+\bar{a}{g^{-1}(U_b)}+\sqrt{n}g^{-1}(U_c)+k;\\
vd_g'&=&\bar{a}g^{-1}(U_a)+a{g^{-1}(U_b)}+\sqrt{n}g^{-1}(U_c)+k;\\
v\delta_g&=& g^{-1}(U_a)+{g^{-1}(U_b)}+g^{-1}(U_c)+1.
\end{eqnarray*}
Here $g^{-1}(U_z)=\sum_{\chi\in U_z} \chi(g^{-1})$ for $z=a,b,c$;
$\delta_g=1$ if $g=1_G$ and $0$ otherwise. Then we get

\begin{eqnarray*}
 g^{-1}(U_a)&=&\frac{(vd_g-k+\bar{a})(\sqrt{n}-a)-(vd_g'-k+a)(\sqrt{n}-\bar{a})
                -\sqrt{n}\bar{a}v\delta_g+\sqrt{n}av\delta_g}{(a-\bar{a})\Delta};\\
 g^{-1}(U_b)&=&\overline{g^{-1}(U_a)};\\
 g^{-1}(U_c)&=&\frac{v(d_g+d_g')-(a+\bar{a})(v\delta_g-1)-2k}{\Delta}.
\end{eqnarray*}

Especially, when $g=1_G$, the above equations give
\begin{eqnarray*}
|U_a|&=&|U_b|=\frac{v(\sqrt{n}-d_1)}{\Delta}+R,\textup{ and}\\
|U_c|&=&\frac{v(2d_1-a-\bar{a})}{\Delta}-1-2R.
\end{eqnarray*}
When $g\neq 1_G$, we divide it into four cases depending on the
values of $d_g$, $d_g'$ as listed in Table~\ref{tab:1}.
%

\begin{table}
\caption{}
\label{tab:1}       
\begin{tabular}{ c c  c c  c}
\hline\noalign{\smallskip}
$d_g$   & $d_g'$  & $g^{-1}(U_a)$        & $g^{-1}(U_b)$          & $g^{-1}(U_c)$ \\
\noalign{\smallskip}\hline\noalign{\smallskip}
$1$     & $1$     &$-\frac{v}{\Delta}+R$ & $-\frac{v}{\Delta}+R$  & $\frac{2v}{\Delta}-1-2R$\\
\noalign{\smallskip}\noalign{\smallskip}
$1$     & $0$     & $\Omega+R$           & $\bar{\Omega}+R$       & $\frac{v}{\Delta}-1-2R$\\
\noalign{\smallskip}\noalign{\smallskip}
$0$     & $1$     & $\bar{\Omega}+R$     & $\Omega+R$             & $\frac{v}{\Delta}-1-2R$\\
\noalign{\smallskip}\noalign{\smallskip}
$0$     & $0$     & $R$                  & $R$                     & $-1-2R$  \\
\noalign{\smallskip}\hline
\end{tabular}
\end{table}

It is easy to see that $|D\cap D^{(-1)}|$ is just the coefficient of $1_G$ in $D^2$.
Using the inversion formula, we have
\[v|D\cap D^{(-1)}|=|U_a|(a^2+\bar{a}^2)+|U_c|n+k^2,\]
which implies
\begin{eqnarray*}
  |D\cap D^{(-1)}|&=&{|U_a|(a^2+\bar{a}^2)+|U_c|n+k^2\over v}\\
  &=&\frac{1}{v}\left[|U_a|(-2n+a^2+\bar{a}^2)+(2|U_a|+|U_c|)n+k^2\right]\\
  &=&\frac{1}{v}\left[-(v(\sqrt{n}-d_1)+k-\sqrt{n})(2\sqrt{n}+a+\bar{a})+(v-1)n+k^2\right]\\
  &=&k-(\sqrt{n}-d_1+\frac{k-\sqrt{n}}{v})(2\sqrt{n}+a+\bar{a})<k\\
\end{eqnarray*}

Next, we define the following sets, which form a partition of $G\setminus\{1_G\}$:
$E_1=D\cap D^{(-1)}\setminus\{1_G\}, E_2=D\setminus D^{(-1)},
E_3=D^{(-1)}\setminus D$, and $E_4=G\setminus(D\cup D^{(-1)}\cup\{1_G\}).$
Neither $E_2$ nor $E_3$ is empty, otherwise, $D=D^{(-1)}=D\cap D^{(-1)}$ implies
that $\chi(D)=\chi(D^{(-1)})=\overline{\chi(D)}$, which is false for $\chi\in U_a$.
Similarly, at least one of $E_1$, $E_4$ is not empty, or else $D+D^{(-1)}=G-1+2d_1$
and $\chi(D)$ takes only two character values when $\chi$
ranges over all nonprincipal characters. Therefore, at least three  of $E_i, 1\le i\le 4$ are not empty.

It is worthy to notice that when $E_1$ is empty but $E_4$ is not, we
obtain a $3$-class association scheme on
$\widehat{G}$. Suppose $\widehat{G}$ has conjugate classes $\{C_0,
C_1,\ldots,C_d\}$, where $C_0=\{ \chi_0 \}$. Define the $i$-th
relation $R_i$ by $(x,y) \in R_i$ if and only if $yx^{-1} \in C_i$.
It is well known that $\mathscr{X}=(\widehat{G},\{R_i\}_{0 \le i \le
d})$ is a $d$-class association scheme \cite{BI}.
Set $\overline{C_i}=\sum_{x \in C_i} x$ for $0 \le i \le d$. Then $\{\overline{C_0},\overline{C_1},\ldots,\overline{C_d}\}$ forms a Schur ring \cite{Ma2}.
With the
Bannai-Muzychuk criterion \cite{Ban,Mu} and the information provided
in Table \ref{tab:1}, we obtain a $3$-class fusion scheme of $\mathscr{X}$
whose first eigenmatrix is

$$
P=\quad\bordermatrix{%
& \chi_0 & U_a & U_b & U_c\cr
1 & 1 & \frac{v(n-d_1)}{\Delta}+R & \frac{v(n-d_1)}{\Delta}+R & \frac{v(2d_1-a-\bar{a})}{\Delta}-1-2R \cr
&&&& \cr
E_2 & 1 & \Omega+R & \bar{\Omega}+R & \frac{v}{\Delta}-1-2R \cr
&&&& \cr
E_3 & 1 & \bar{\Omega}+R & \Omega+R & \frac{v}{\Delta}-1-2R \cr
&&&& \cr
E_4 & 1 & R & R & -1-2R \cr
},
$$
with
$$
\det P=\frac{v^3}{(a-\bar{a})\Delta}.
$$

In the following, we will derive some necessary conditions from the above discussions.
We fix $p$ to be the only prime divisor of $\Delta_d$.
Write $\sqn=p^xu$ for some nonnegative integer $x$ and $(p,u)=1$, i.e., $p^x||\sqn$.

\begin{enumerate}
\item[(1)]
$\Delta|v$, $\Delta|(k-\sqn)$, $(a-\bar{a})\Delta|v(\sqrt{n}-a)$, $(a-\bar{a})|v$.\\
These follow from the fact that all entries in Table \ref{tab:1} are algebraic integers,
$E_2$ and $E_3$ are not empty, and at least one of $E_1$, $E_4$ is not empty.
The last one comes from $\Omega-\bar{\Omega}=v/({a-\bar{a}})$.\\
\item[(2)]
$v\,|(k-\sqrt{n})(2\sqrt{n}+a+\bar{a})$.\\
This is because $|D\cap D^{(-1)}|$ is an integer.\\
\item[(3)]
$w|(k-\sqrt{n}),$ $\sqrt{n}+\frac{k-\sqrt{n}}{w}\leq p^s$.\\
These come from the proof of Lemma~\ref{lemma:only one prime
divisor}. If $p=2$, we actually have $v|2^{s-rk_2(G)}(k-\sqn)$
and
$\sqn+\frac{k-\sqn}{2^{rk_2(G)}w}\leq 2^{s-rk_2(G)}$.\\
\item[(4)]
$p\,|(2\sqrt{n}+a+\bar{a})$.\\
Suppose not, then we otain $p^s|(k-\sqn)$ from (2).
Recall that $w|(k-\sqn)$, so we have $v|(k-\sqn)$ which is false.\\
\item[(5)]
$1\leq 2d_1-a-\baa$.\\
This comes from $g^{-1}(U_c)$ is an integer not exceeding $|U_c|$.\\
\end{enumerate}

Now, we give some information about other prime divisors of the order of $G$.

\begin{proposition}\label{proposition:q|a-baa}
If $q$ is another prime divisor of $v$, then we have $q|(a-\baa)$.
\end{proposition}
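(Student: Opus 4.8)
The plan is to reduce the basic character identities, together with the Fourier inversion formula, modulo a prime lying over $q$, and to extract from the resulting congruences the assertion $q\mid(a-\baa)$. Fix a prime ideal $\mathfrak{Q}$ of $\ZZ[\xi_m]$ over $q$, and split $G=G_q\times W'$, where $G_q$ is the Sylow $q$-subgroup and $W'$ the product of the other Sylow subgroups (so $q\nmid|W'|$ and, since $q\ne p$, also $p\mid|W'|$, in particular $W'\ne 1$). Every $\chi\in\widehat{G}$ then factors uniquely as $\chi=\psi\chi'$ with $\psi\in\widehat{G_q}$, $\chi'\in\widehat{W'}$; write $\widetilde{\chi'}$ for $\chi'$ regarded as a character of $G$ trivial on $G_q$, and call $\{\psi\chi':\psi\in\widehat{G_q}\}$ the \emph{fibre} over $\chi'$. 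First I would record the congruence: any $q$-power root of unity $\xi$ satisfies $\xi\equiv 1\pmod{\mathfrak Q}$, since in the residue field of characteristic $q$ one has $(\xi-1)^{q^{j}}=\xi^{q^{j}}-1=0$ for $j$ with $\xi^{q^{j}}=1$; hence $\chi(g)=\psi(g_q)\chi'(g')\equiv\widetilde{\chi'}(g)\pmod{\mathfrak Q}$ for all $g$, so
\[
\chi(D)\equiv\widetilde{\chi'}(D)\pmod{\mathfrak Q}\qquad(\chi=\psi\chi').
\]
Thus all character values of $D$ along one fibre are congruent mod $\mathfrak Q$. I would also note, exactly as in the proof of Lemma~\ref{lemma:only one prime divisor}, that a nonprincipal $\psi\in\widehat{G_q}$ has $q$-power order, which is prime to $p$, whence $\psi(D)=\sqn$.

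Call a fibre over $\chi'\ne\chi_0$ \emph{split} if the values $\chi(D)$, $\chi$ in the fibre, are not all equal, and \emph{pure} otherwise. The next step is to rule out the possibility that every fibre over a nonprincipal $\chi'$ is pure. If it were, then for $\chi'\ne\chi_0$ the value $\chi(D)$ would equal $\widetilde{\chi'}(D)$, depending only on $\chi'$, while over $\chi_0$ one has $\psi(D)=\sqn$ for $\psi\ne\chi_0$ and $\chi_0(D)=k$. Computing the coefficient $d_h$ of $D$ at $h=(h_q,h')$ with $h_q\ne 1$ by Lemma~\ref{inversion formula}, the fibres over $\chi'\ne\chi_0$ drop out because $\sum_{\psi\in\widehat{G_q}}\psi(h_q^{-1})=0$, and the fibre over $\chi_0$ yields $d_h=\frac{1}{v}\big(k+\sqn\sum_{\psi\ne\chi_0}\psi(h_q^{-1})\big)=\frac{k-\sqn}{v}$. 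But $k^{2}=n+\lambda v>n$ gives $k>\sqn$, while $k<v$, so $(k-\sqn)/v$ is not an integer — a contradiction. Hence some fibre over a nonprincipal $\chi'$ is split.

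Finally I would exploit a split fibre. Its values hit at least two of $a,\baa,\sqn$, and the displayed congruence makes those two $\equiv\pmod{\mathfrak Q}$. If the fibre meets both $U_a$ and $U_b$ we immediately get $a\equiv\baa\pmod{\mathfrak Q}$. Otherwise, using $U_a^{(-1)}=U_b$ and $U_c^{(-1)}=U_c$, the bijection $\chi\mapsto\chi^{-1}$ sends the $\chi'$-fibre onto the $\chi'^{-1}$-fibre and interchanges $U_a$ with $U_b$ while fixing $U_c$; so we may assume the fibre meets $U_a$ and $U_c$, giving $a\equiv\sqn\pmod{\mathfrak Q}$, and then the $\chi'^{-1}$-fibre meets $U_b$ and $U_c$, giving $\baa\equiv\sqn\pmod{\mathfrak Q}$, so again $a\equiv\baa\pmod{\mathfrak Q}$. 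Therefore $a-\baa\in\mathfrak{q}:=\mathfrak Q\cap\mathcal{O}_{Q(\sqrt d)}$, a prime over $q$. Writing $a-\baa=c_0y\sqrt d$ with $y\in\ZZ$ and $c_0\in\{1,2\}$ read off from the integral basis of $Q(\sqrt d)$, the element $c_0\sqrt d$ has norm $|\Delta_d|$, a power of $p$ and hence prime to $q$, so it is a unit at $\mathfrak q$; consequently $y\in\mathfrak q\cap\ZZ=q\ZZ$ and $q\mid(a-\baa)$, as claimed.

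The step I expect to be the crux is the last one: converting ``two of $a,\baa,\sqn$ coincide mod $\mathfrak Q$'' into $a\equiv\baa\pmod{\mathfrak Q}$. A split fibre meeting only $U_a$ and $U_c$ (or only $U_b$ and $U_c$) does not by itself force $q\mid(a-\baa)$ — a priori it could instead give only $q\mid n$ — so one genuinely needs the involution $\chi\mapsto\chi^{-1}$ together with $U_a^{(-1)}=U_b$ to pass to the companion fibre over $\chi'^{-1}$ and double the congruence. I would also be careful about the non-degeneracy assumptions ($\lambda\ge 1$, $k<v$) used to discard the all-pure case, and about the harmless point that $W'\ne 1$ so that nonprincipal $\chi'$ exist.
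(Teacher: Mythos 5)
Your proof is correct, and it takes a genuinely different route from the paper's. The paper argues by contradiction: assuming $q\nmid(a-\baa)$ (and hence $q\nmid(\sqn-a)$, $q\nmid(\sqn-\baa)$), it passes to the quotient $G_p\times\langle\alpha:\alpha^q=1\rangle$, uses the invariance of $D$ under the Galois stabilizer $T$ of $a$ --- which requires the explicit description of $T$ for each of $d=-p,-1,-2$ and the Chinese remainder theorem --- to write the image as $D_0+D_1(\langle\alpha\rangle-1)$ with $D_0,D_1\in\ZZ[G_p]$, and then deduces from $q\chi_1(D_1)=c_1-c_2$ that $D_1=\frac{k-\sqn}{qp^s}G_p$, forcing $p^s\mid(k-\sqn)$ and the false conclusion $v\mid(k-\sqn)$. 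You instead work with a single prime $\mathfrak{Q}$ above $q$ and the elementary congruence $\xi\equiv 1\pmod{\mathfrak{Q}}$ for $q$-power roots of unity, so that character values along a $\widehat{G_q}$-fibre are congruent modulo $\mathfrak{Q}$; your ``all fibres pure'' contradiction (the coefficient $(k-\sqn)/v$ is not an integer) plays exactly the role of the paper's contradiction $v\mid(k-\sqn)$, and your use of the involution $\chi\mapsto\chi^{-1}$ with $U_a^{(-1)}=U_b$, $U_c^{(-1)}=U_c$ correctly upgrades a single congruence $a\equiv\sqn$ or $\baa\equiv\sqn$ to $a\equiv\baa\pmod{\mathfrak{Q}}$ --- the analogue, in your setting, of the paper's opening observation that $q\mid(\sqn-a)$ would already yield $q\mid(a-\baa)$ by conjugation. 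What your approach buys is independence from the structure of $T$ (no case split on $d$, no CRT); what it costs is the extra descent at the end from $a-\baa\in\mathfrak{Q}$ to $q\mid(a-\baa)$, which you handle correctly by noting that $a-\baa$ is a rational-integer multiple of $c_0\sqrt d$ whose norm is a power of $p$, hence a unit at $\mathfrak{q}$. Both proofs rest on the same input from the proof of Lemma~\ref{lemma:only one prime divisor}, namely that nonprincipal characters trivial on $G_p$ take the value $\sqn$.
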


\begin{proof}
Assume that $q \nmid (a-\baa)$, then $q\nmid (\sqn-a)$, since otherwise $q\mid(\sqn-\baa)$,
and their difference gives $q \mid (a-\baa)$. Let
$$
\bar{G}=G_p\times\langle \alpha:\alpha^q=1\rangle
$$
be a quotient group of $G$, and define $\bar{D}$ as the image of $D$ in $\bar{G}$.

When $d=-p$ where $p$ is an odd prime, we have $\sigma_t(\sqrt{d})=\sigma_t(\sqrt{p^*})=(\frac{t}{p})\sqrt{p^*}$.
Thus $T=\{t\in\mathbb{Z}_m^*\vert ({t\over p})=1\}.$
When $d=-1$, we have $p=2$ and $4\vert m$. Since $\sqrt{-1}=\xi_4$, we see that
$T=\{t\in\mathbb{Z}_m^*\vert t\equiv 1\pmod 4\}.$
When $d=-2$, we have $p=2$ and $8\vert m$. Since $\sqrt{-2}=\xi_8+\xi_8^3$,
we see that $T=\{t\in\mathbb{Z}_m^*\vert t\equiv 1,3\pmod 8\}.$
So whenever $d=-p,-1$ or $-2$, we can find an integer $t\in T$
satisfying that $t\equiv 1\pmod{p^s}$ and $t\equiv i\pmod q$
for any integer $i,1\le i\le q-1$, by the Chinese remainder theorem.

Because $D$ is fixed by $T$, we can see that
$$\bar{D}=D_0+D_1(\langle \alpha\rangle-1)$$
with $D_0,D_1\in \ZZ[G_p]$. Denote $\langle \alpha:\alpha^q=1\rangle$ by $G_q$,
let $\chi_1 \times \chi_2$ be a character of $G_p \times G_q$.
When $\chi_2$ is principal on $G_q$, we have
\begin{eqnarray}
\chi_1(D_0)+(q-1)\chi_1(D_1)=c_1,
\end{eqnarray}
otherwise,
\begin{eqnarray}
\chi_1(D_0)-\chi_1(D_1)=c_2.
\end{eqnarray}
$c_1,c_2\in\{a,\baa,\sqn\}$ if $\chi_1$ is nonprincipal on $G_p$, and $c_1=k,c_2=\sqn$
otherwise. When $\chi_1$ is nonprincipal, by taking the difference of the above
two equations, we obtain $q\chi_1(D_1)=c_1-c_2$, and our assumption forces
$c_1=c_2$ and $\chi_1(D_1)=0$. When $\chi_1$ is principal, we have
$\chi_1(D_1)=(k-\sqn)/{q}$. Hence
$$
D_1=\frac{k-\sqn}{qp^s}G_p,
$$
which gives $p^s|(k-\sqn)$. Together with $w|(k-\sqn)$, we obtain $v|(k-\sqn)$
which is false. Therefore $q|(a-\baa)$.
\qed\end{proof}


In addition, we obtain some results about the multiplier of $D$.

\begin{proposition}\label{proposition:D^(q)=D}
If there is a prime $q$ dividing $n$ and coprime to $v$, then we have $D^{(q)}=D$.
\end{proposition}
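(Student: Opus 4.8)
The plan is to reduce to a single essential case and then kill it. Since $q\nmid v$ and the exponent $m$ divides $v$, the residue of $q$ lies in $\ZZ_m^*$, so $g\mapsto g^q$ is an automorphism of $G$; hence $D^{(q)}$ is again a $(v,k,\lambda)$ difference set and $\chi(D^{(q)})=\chi^q(D)=\sigma_q(\chi(D))$ for every $\chi\in\widehat G$. Because $\sigma_q$ fixes $\ZZ$ pointwise it fixes $k$ and $\sqn$ (recall $\sqn\in\ZZ$), and since $\chi\mapsto\chi^q$ permutes the nonprincipal characters, $\sigma_q$ permutes $X=\{a,\baa,\sqn\}$. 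Fixing $\sqn$, it must therefore either fix $a$ or interchange $a$ and $\baa$. In the first case $q\bmod m$ lies in $T=\{t\in\ZZ_m^*\mid\sigma_t(a)=a\}$, and since we already know $D^{(t)}=D$ for all $t\in T$, we are done.

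So the whole content lies in excluding the second case, $\sigma_q(a)=\baa$, in which $\sigma_q$ restricts to the nontrivial automorphism of $Q(a)=Q(\sqrt d)$. First note $q\ne p$, since $p\mid v$ but $q\nmid v$; as the only prime factor of $\Delta_d$ is $p$, the prime $q$ is unramified in $Q(\sqrt d)$, and a standard fact on decomposition of primes says that an unramified prime of a quadratic field is inert precisely when $\sigma_q$ acts nontrivially on the field — which is now the case. Thus $q$ is inert in $Q(\sqrt d)$, so $q\mathcal O$ is a prime ideal of the ring of integers $\mathcal O$ of $Q(\sqrt d)$, and it is stable under complex conjugation $\sigma_{-1}$. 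On the other hand $q\mid n=(\sqn)^2$ forces $q\mid\sqn$, hence $q^2\mid n$. Writing $\nu$ for the valuation attached to $q\mathcal O$, we get $\nu(a)=\nu(\sigma_{-1}(a))=\nu(\baa)$ while $\nu(a)+\nu(\baa)=\nu(n)\ge 2$, so $\nu(a)\ge 1$; that is, $q\mid a$ in $\mathcal O\subseteq\ZZ[\xi_m]$, and likewise $q\mid\baa$.

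It follows that $q\mid\chi(D)$ in $\ZZ[\xi_m]$ for \emph{every} nonprincipal $\chi$: the values on $U_a\cup U_b$ are $a$ or $\baa$, and the values on $U_c$ equal $\sqn$, which is divisible by $q$. Feeding this into the Fourier inversion formula, for each $h\in G$ we obtain $vd_h=k+\sum_{\chi\ne\chi_0}\chi(D)\chi(h^{-1})\equiv k\pmod{q\ZZ[\xi_m]}$, and since both sides are rational integers this yields $vd_h\equiv k\pmod q$ for all $h$. As $\gcd(v,q)=1$, all coefficients $d_h$ are congruent modulo $q$; being $0$ or $1$ with $q\ge 2$, they are all equal, so $D=\emptyset$ or $D=G$, contradicting the standing hypothesis that $\chi(D)$ takes three distinct nonprincipal values. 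Hence the second case cannot occur, and $D^{(q)}=D$.

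The one substantive step — and the expected obstacle — is exactly this second case: extracting from the mere divisibility $q\mid n$ enough control of the character values to reach the inversion-formula contradiction. Morally this is a sharpening of the first multiplier theorem without the usual hypothesis $q>\lambda$, and it is made possible only by the strong structure at hand: $\sqn\in\ZZ$ (so that $q\mid\sqn$ and $q^2\mid n$), together with the fact that $\sigma_q(a)=\baa$ forces $q$ to be inert in the imaginary quadratic field $Q(\sqrt d)$, which upgrades $q^2\mid a\baa$ to $q\mid a$. Everything else is bookkeeping on top of results already established.
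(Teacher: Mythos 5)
Your proof is correct, and while it turns on the same pivot as the paper's --- whether $\sigma_q$ fixes $a$ or swaps $a$ and $\baa$ --- it executes both halves differently. The paper works with a prime ideal $Q$ of $\ZZ[\xi_v]$ above $q$: it first rules out $Q\mid a$ and $Q\mid\baa$ holding simultaneously, since that would give $Q\mid\Delta=2\sqn-a-\baa$ and hence $q\mid v$ (using $\Delta\mid v$ from condition (1)), and then combines $n=a\baa$ with the Frobenius identity $\sigma_q(Q)=Q$ to force $\sigma_q(a)=a$, after which the inversion formula gives $D^{(q)}=D$ exactly as in your first case. You instead run the dichotomy on $\sigma_q$ directly and, in the swap case, deliberately establish the very divisibility $q\mid a$, $q\mid\baa$ that the paper excludes --- via inertness of $q$ in $Q(\sqrt{d})$ together with $q^2\mid n$ --- and then dispose of it with a multiplier-theorem-style Fourier argument showing all coefficients $d_h$ would be congruent modulo $q$, making $D$ trivial. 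Both contradictions are sound; the paper's is shorter because it reuses $\Delta\mid v$, while yours is self-contained at that step and makes explicit the decomposition-theoretic content ($\sigma_q(a)=\baa$ exactly when $q$ is inert) that the paper leaves implicit in $\sigma_q(Q)=Q$. One available economy: once you have $q\mid a$, $q\mid\baa$ and $q\mid\sqn$, you could conclude $q\mid\Delta$ and hence $q\mid v$ immediately from $\Delta\mid v$, contradicting $\gcd(q,v)=1$ without invoking the inversion formula at all.
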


\begin{proof}
Let $Q$ be a prime ideal in $\ZZ[\xi_v]$ lying over $q$, then $Q|n$ since $q|n$.
If $Q|a$ and $Q|\baa$, then $Q|\Delta=2\sqn-a-\baa$, which gives $Q|v$ since
$\Delta|v$. This contradicts to the fact that $q$ is coprime to $v$.
For $n=a\baa$,  we have $Q$ divides exactly one of $a,\baa$. Since
$\sigma_q(Q)=Q$,
$\{\sigma_q(a),\sigma_q(\baa)\}=\{a,\baa\}$,
we must have
$\sigma_q(a)=a$ and $\sigma_q(\baa)=\baa$. Because $\chi(D^{(q)})=\sigma_q(\chi(D))$,
and $\chi(D)$ only takes the values $k,\sqn,a,$ and $\baa$, we immediately get that
$\chi(D^{(q)})=\chi(D)$ when $\chi$ ranges over all characters. It follows from
the inversion formula that $D^{(q)}=D$.
\qed\end{proof}

\noindent\textbf{Remark:} (1) For such a prime $q$ as in
Proposition~\ref{proposition:D^(q)=D}, from $\sigma_q(a)=a$, namely
$\sigma_q(\sqrt{d})=\sqrt{d}$, we have: $(\frac{q}{p})=1$ if $d=-p$
with $p$ odd;
 $q\equiv 1,3\pmod{8}$ if $d=-2$;
 $q\equiv 1 \pmod{4}$ if $d=-1$.

(2) Let $q\neq p$ be a prime divisor of both $n$ and $v$. Then by
Proposition~\ref{proposition:q|a-baa}, we have $q|(a-\baa)$.
From $4n=(a+\baa)^2-(a-\baa)^2$, we get $q|(a+\baa)$.
If $q$ is odd, then $q| a$ and $q|\baa$.

\section{The case $d=-p$}\label{section:d=p}
We will first deal with the case $d=-p$ with $p$ an odd prime in this section.
Assume $d=-p$, where $p$ is a prime $\equiv 3\pmod 4$.

From Lemma~\ref{lemma:a is purely imaginary}, we know $a+\baa\neq
0.$ Some new inequalities and necessary conditions about
divisibility can be obtained under this assumption. We will use
notation like (6') to mark the property which is obtained under the
additional assumption that $p$ is an odd prime.
For an integer $l$, the greatest integer $z$ such that
$p^z$ divides $l$ is denoted by $\orp(l)$.

\begin{enumerate}
\item[(6')]
$p^x||(a+\baa)$, $p^x|(a-\baa)$, $p^x|k$, $s\geq x+1$.\\
Since $d=-p\equiv 1\pmod 4$ and $a$ is an algebraic integer in
$\mathbb{Q}(\sqrt{-p})$, there exist integers $e$ and $f$ such
that $a=e+f{-1+\sqrt{-p}\over 2}.$ Then $a-\bar{a}=f\sqrt{-p}$,
so $\orp((a-\baa)^2)$ is odd.
The fact $p^x||(a+\baa)$ follows from $4n=(a+\baa)^2-(a-\baa)^2$ and
$\orp((a-\baa)^2)$ is odd. Then we obtain $p^x|(a-\baa)$ and
$p^x|\Delta=2\sqn-a-\baa$. Since $\Delta|(k-\sqn)$, we have
$p^x|(k-\sqn)$ and hence $p^x|k$. From $ w|(k-\sqn)$, we have
$v|(k-\sqn)p^{s-x}$,
which gives $s\geq x+1$.\\
\item[(7')]
$x\geq 1$.\\
Take a character $\chi\in U_a$, and let $\chi'$ be another character
of $G$ which coincides with $\chi$ on $W$ and is principal on $G_p$.
Thus we see $\chi(D)=a$ and $\chi'(D)=\sqrt{n}$. Since
$(1-\xi_{p^s})|(\chi(D)-\chi'(D))$, it follows that
$(1-\xi_{p^s})|(a-\sqn)$. Similarly, we have
$(1-\xi_{p^s})|(\baa-\sqn)$. Recall that $p|(2\sqrt{n}+a+\bar{a})$,
we have
$(1-\xi_{p^s})|4\sqn$. Therefore we get $p|4\sqn$, which implies $x\geq 1$.\\
\item[(8')]
$p^x||k$, $p^x||(k-\sqn)$, $p^s||(k+\sqn)$, $\Delta|p^xw$.\\
From $k(k-1)=\lambda(v-1)$ and (1), we have
$\orp(\lambda)=\orp(k)\geq x$. Then from $k^2=n+\lambda v$ and
$\orp(n)=2x<x+s\leq\orp(\lambda v)$, we get $\orp(k)=x$, i.e.,
$p^x||k$. Because $(k+\sqn)-(k-\sqn)=2\sqn$, we readily verify that
at least one of $\orp(k+\sqn)$, $\orp(k-\sqn)$ is $x$. Now
$k^2-n=(k+\sqn)(k-\sqn)=\lambda v$ gives $\{\orp(k+\sqn),
\orp(k-\sqn)\}=\{s,x\}$.
 If $\orp(k-\sqn)=s$, together with $w|(k-\sqn)$ we get $v|(k-\sqn)$
which is false. Hence we have $\orp(k+\sqn)=s$, $\orp(k-\sqn)=x$.
The last one $\Delta|p^xw$ follows from $\Delta|v$, $\Delta|(k-\sqn)$, and gcd$(v,(k-\sqn))=p^xw$.\\
\end{enumerate}

Now define
$$
\gamma=\frac{k-\sqn}{p^xw}.
$$
From $w|(k-\sqn)$ and $p^x||(k-\sqn),$ we see that $\gamma$ is an integer coprime to $p$. From
$$
(k+\sqn)(k-\sqn)=k^2-n=\lambda v,
$$
we have
$$
\gamma(\lambda +n+\sqn)=p^{s-x}\lambda,
$$
which gives
\begin{eqnarray*}
\lambda&=&\frac{(n+\sqn)\gamma}{p^{s-x}-\gamma}=\frac{(p^xu+1)u}{p^{s-x}-\gamma}p^x\gamma,\\
k&=&n+\lambda=\frac{p^{s-x}n+\sqn\gamma}{p^{s-x}-\gamma}
  =\frac{(p^su+\gamma)u}{p^{s-x}-\gamma}p^x,\textup{and}\\
 w&=&\frac{k-\sqn}{p^x\gamma}=\frac{(n-\sqn)p^{s-x}+2\sqn\gamma}{\gamma p^x(p^{s-x}-\gamma)}
   =\frac{(p^xu-1)u}{\gamma}+\frac{(p^xu+1)u}{p^{s-x}-\gamma}.
\end{eqnarray*}
Write
\begin{eqnarray*}
        a+\baa&=&-p^x\alpha,\\
        a-\baa&=&\eta p^x\sqrt{-p}
\end{eqnarray*}
with $\alpha,\eta\in \ZZ$ and gcd$(p,\alpha)=1$. Because $1\leq
2d_1-a-\baa$ and $a+\baa\neq0$, we must have $\alpha\geq 1$. From
$4n=(a+\baa)^2-(a-\baa)^2$,  $4u^2=\alpha^2+p\eta^2$. From
Proposition~\ref{proposition:q|a-baa}, we see that
$\pi(w)=\pi(\eta)\setminus\{p\}$, where $\pi(w)$ denotes the set of prime divisors of $w$.
From $\sqrt{n}+{k-\sqrt{n}\over w}\le p^s$ we get $u+\gamma\leq p^{s-x}.$
After simplification, we reduce the above conditions to the following
list:

\begin{center}
\begin{tabular}{rrrr}
$\gamma|(p^xu-1)u,$ & $(p^{s-x}-\gamma)|(p^xu+1)u,$ & $2u+\alpha|w,$ & $ p^{s-x}|p^x(2u-\alpha)$,\\
$\eta|p^{s-x}w,$ & $u+\gamma\leq p^{s-x},$ & $\pi(w)=\pi(\eta)\setminus\{p\},$ &$4u^2=\alpha^2+p\eta^2$,\\
$s\geq x+1,$&$x\geq 1,$ & $\alpha\geq 1.$\\[5pt]
\end{tabular}
\end{center}

\noindent\textbf{Remark}:  (1) From the expression of $w$, we can show that
roughly $v\geq 4n$ with $\gamma$ as a variable in $(0,p^{s-x})$, and
minimized when $\gamma=\frac{p^{s-x}}{2}$. We have made no use of divisibility
conditions involving $a,\baa$. Notice that $w>1$, since $(2u+\alpha)|w$ and $2u+\alpha\ge 3$,
i.e., $G$ can not be a $p$-group.

(2) From $|U_c|\geq 0$, we get $v\geq\frac{2k-2d_1}{2d_1+p^x\alpha}+1$, which is trivial.

(3) Let $\widetilde{U_c}=\{\chi\in U_c\mid \chi\textup{ is nonprincipal on }G_p \}$.
Then we have $|\widetilde{U_c}|=|U_c|-(|W|-1)$. We define a group action of $\ZZ_{p^s}^{*}$ on
$\widetilde{U_c}$ by $(t,\chi)\rightarrow \chi^t$, where $t\in \ZZ_{p^s}^{*},\chi\in\widetilde{U_c}$.
It is not hard to see that each orbit has length divisible by $p-1$. Therefore we obtain
$(p-1)\big|(w(d_1-\sqn)-(k-\sqn))$.

(4) Let $T_1=\{t\in Z_{p^s}^*|(\frac{t}{p})=1\}$. We can define a similar group action
of $T_1$ on $U_a$ as above. By an analogous argument,
we have $\frac{p-1}{2}\big|(w\sqn-d_1+(k-\sqn))$.\\

At this point, we may speculate all the known parameter sets.
We have conducted a computer search for
$p\in\{3,5,7,11,13,17,19\},1\le x,s\le10,1\le \alpha,\eta\le 10^4$,
and failed to
find a parameter set satisfying all the conditions
listed in this section and Section~\ref{section:necessay conditions}. The examples below indicate that
such parameter sets might exist.

\begin{example}
Take $p=7$, $\alpha=8$, $\eta=24$, $u=32$, $\gamma=4$, $v=2^3\cdot 3^5\cdot 7^3$, $k=54656$, $n=2^{10}\cdot 7^2$. In this case, $3|w$, $3|(k-\sqn)$,  and with $d_1=0$, all the conditions in this section are satisfied except $(\sqn-d_1+\frac{k-\sqrt{n}}{v})(2\sqrt{n}+a+\bar{a})\leq k$ .
\end{example}

\begin{example}
Take $p=11$, $\alpha=30$, $\eta=48$, $u=81$, $\gamma=980$, $v=2^{10}\cdot 3\cdot 11^5$, $k=364287561$, $n=3^8\cdot11^4$. In this case, $w\equiv 2\pmod{5}$, $\sqn\equiv 1\pmod{5}$, $5|(k-\sqn)$, and with $d_1=1$, all the conditions in this section are satisfied except $\frac{p-1}{2}|(w\sqn-d_1+(k-\sqn))$.
\end{example}

\section{The case $d=-2$}
\label{section:d=-2}
Secondly, we  move to the case $d=-2$ and $\Delta_d=-8$. Under such assumption,
we have already known that $4|m$ in Section~\ref{section:necessay conditions}.
Define
$$
l=\textup{min}\{|N|\,\big{|}\, G/N \textup{ has exponent strictly divisible by } 4\}.
$$
So we may choose such a subgroup $N$ of $G$, whose order is $l$ and $G/N$ has exponent
strictly divisible by $4$. Considering the homomorphic image of $D$ in $G/N$,
as in the proof of Lemma~\ref{lemma:only one prime divisor}, we obtain
$$
\sqn+\frac{k-\sqn}{v/l}\leq l.
$$
Clearly $l$ is a power of $2$ and $l\leq 2^{s-2}$. Since $\sqrt{-2}$
lies in $Q(\xi_8)$ but not in $Q(\xi_4)$, we must have $8|\textup{exp}(G)$, hence $l\geq 2$.
Write $k=\sqn+\gamma\Delta$ for
some positive integer $\gamma$, and $a=u_1+u_2\sqrt{-2}$, where $u_1,u_2$
are two integers satisfying $u_1^2+2u_2^2=n$. We have run a computer search for
$-10^4\le u_1\le 1,1\le u_2,\gamma\le 10^4$ and found many parameter sets
such that all the conditions in Section~\ref{section:necessay conditions}
are satisfied. Here we only give
the following examples.

\begin{example}\label{example:4.1}
When $a=96(-1+2\sqrt{-2})t$, $\gamma=216t$, $n=2^{10}\cdot 3^4\cdot t^2$, $v=4n$,
with $t\in \{2^i,12\cdot 2^i,20\cdot 2^i\,|\,\textup{$i$ is a nonnegative integer}\}$,
all the conditions in Section~$\ref{section:necessay conditions}$ are satisfied.
If $t=1$, we get $l \ge {1\over (1-{k-\sqrt{n}\over v})}\sqrt{n}=2\sqn=2^6\cdot 3^2$, so $l\geq 2^{10}$.
This forces the Sylow $2$-subgroup of $G$ to be cyclic, but a result of Turyn
says such difference sets do not exist, cf.~\cite[Theorem 2.4.11]{Pott}.
\end{example}

\begin{example}
When $a=192(-7+4\sqrt{-2})t$, $\gamma=972t$, $n=2^{12}\cdot 3^6\cdot t^2$, $v=4n$,
$k=2n+\sqn$, with $t=2^i$ for some nonnegative integer $i$, all the conditions in
Section~$\ref{section:necessay conditions}$ are satisfied.
If $t=1$, $l\geq 2\sqn=2^7\cdot 3^3$, then $l\geq 2^{12}$. This is ruled out similarly as above.
\end{example}

\section{The case $d=-1$}
\label{section:d=-1}
The analysis of the case $d=-1$ and $\Delta_d=-4$ is almost the same
as that of the case $d=-2$. First define
$$
l=\textup{min}\{|N|\,\big{|}\, G/N\textup{ has exponent strictly divisible by } 2\}.
$$
Similarly as above, we have
\begin{equation}\label{equation: ineq of l}
\sqn+\frac{k-\sqn}{v/l}\leq l,
\end{equation}
where $l$ is a power of $2$, and $2\leq l\leq 2^{s-1}$.
Write $k=\sqn+\gamma\Delta$ for some positive integer $\gamma$,
and $a=u_1+u_2\sqrt{-1}$, where $u_1,u_2$ are two integers satisfying $u_1^2+u_2^2=n$.
We have also run a computer search for
$-10^4\le u_1\le 1,1\le u_2,\gamma\le 10^4$  and found many parameter sets
such that all the conditions in Section~\ref{section:necessay conditions}
are satisfied. Here we give an example.

\begin{example}\label{example:5.1}
When $a=160(-3+4\sqrt{-1})t$, $\gamma=500t$, $n=2^{10}\cdot 5^4\cdot t^2$, $v=4n$,
$k=2n+\sqn$, with $t=2^i$ for some nonnegative integer $i$,  all the conditions in
Section~$\ref{section:necessay conditions}$ are satisfied.
\end{example}

\section{Special Cases}\label{section:special cases}
Three special cases will be considered in this section:
\begin{enumerate}[(1)]
\item $D$ is a Hadamard difference set with $a+\bar{a}=0$;
\item $G$ is a $p$-group;
\item $U_c\cup\{\chi_0\}$ is a subgroup of $\widehat{G}$.
\end{enumerate}

\subsection{$D$ is a Hadamard difference set with $a+\bar{a}=0$}
Now let $D$ be a Hadamard difference set with three nontrivial character
values $\sqn$, $a$ and $\bar{a}$. Assume that $a+\baa=0,$
then by Lemma~\ref{lemma:a is purely imaginary}, we obtain $d=-1$ and $a=\pm i\sqn$.
Since $1\leq 2d_1-a-\baa$, we infer that $d_1=1,$ i.e., $1_G\in D$.
In the following, we split the discussion into two cases, according to
the parameter of $D$.

If $D$ is with parameter $(v,k,\lambda)=(4n,2n+\sqn,n+\sqn)$,
then it will satisfy all the necessary conditions,
with the only possible exception of (\ref{equation: ineq of l}),
which becomes $l\geq 2\sqn$ here.
Using formulas of Section~\ref{section:necessay conditions}, we have
$\Delta=2\sqn, R=\sqn,
 \Omega=-\sqn-i\sqn,$ and $|D\cap D^{(-1)}|=2\sqn$.
Denote $H=D+D^{(-1)}-G.$ There are $2\sqn (=|D\cap D^{(-1)}|)$ elements
whose coefficients are $1$ in $H$, and others have coefficients $0$ or $-1$.
In addition, the sum of all coefficients of $H$ is $2\sqn(=2k-v)$.
It now follows that $H$ is a subset of $G$.
Denote $M=U_c\cup\{\chi_0\}$, then we have
$$
\chi(H)=\chi(D)+\chi(D^{(-1)})-\chi(G)=\left\{
            \begin{array}{ll}
            2\sqn,\quad &\textup{if }\chi\in M;\\
            0,\quad &\textup{if }\chi\in\widehat{G}\setminus M.
            \end{array}
        \right.
$$
Using inversion formula, we find $H^2=2\sqrt{n}H$ and $H=H^{(-1)}$, i.e., $H$ is a
subgroup of $G$ with order $2\sqn$.
We say a character of $G$ annihilates a subgroup $H$ of $G$ if $\chi(h)=1$
for all $h\in H$. The set of all characters of $G$ annihilating $H$ is called
the annihilator of $H$ in $\widehat{G}$, and denoted by $H^{\perp}$.
We readily verify that $M=H^{\perp}$ and $|M|=|H^{\perp}|=|G/H|=2\sqn.$
As in the proof of Lemma~\ref{lemma:only one prime divisor}, we find $G_2^{\perp}\le M$
and $|G_2^{\perp}|=|G/G_2|=w$.
This leads to $w\,|2\sqn$, which implies $w^2\,|4n$.
We recall that $v=4n=2^sw$, where gcd$(w,2)=1$.
Hence  we find $w^2\,|2^sw$, which implies $w=1$, i.e., $G$ is a $2$-group.
It now follows that $n=2^{s-2}$, and $s$ must be even, since $\sqn$ is an integer.
Write $s=2m$ for some nonnegative integer $m$,
then $D$ is with parameter $(v,k,\lambda)=(2^{2m},2^{2m-1}+2^{m-1},2^{2m-2}+2^{m-1})$.
In this case, we may update Table \ref{tab:1} to get Table \ref{tab:2}.\\
%
\begin{table}
\caption{}
\label{tab:2}       
\begin{tabular}{ c  c c c c}
\hline\noalign{\smallskip}
$d_g$ & $d_g'$  & $g^{-1}(U_a)$ & $g^{-1}(U_b)$ & $g^{-1}(U_c)$\\
\noalign{\smallskip}\hline\noalign{\smallskip}
    $1$   & $1$     & $-\sqn$        &   $-\sqn$      &    $2\sqn-1$ \\ \noalign{\smallskip}
    $1$   & $0$     & $-i\sqn$      &   $i\sqn$     &     $-1$ \\\noalign{\smallskip}
    $0$   & $1$     & $i\sqn$       &   $-i\sqn$    &     $-1$ \\\noalign{\smallskip}
\noalign{\smallskip}\hline
\end{tabular}
\end{table}

On the other hand, if $D$ is with parameter $(v,k,\lambda)=(4n,2n-\sqn,n-\sqn)$,
then it can be ruled out. First we find $|D\cap D^{(-1)}|=1$ and $|U_c|=1+2\sqn$.
Denote $H=G+1-D-D^{(-1)}.$ Clearly $H$ is a subset of $G$ with size $1+2\sqn$.
We readily verify
$$
\chi(H)=\left\{
\begin{array}{ll}
            1+2\sqn, &\textup{if }\chi=\chi_0,\\
            1-2\sqn, &\textup{if }\chi\in U_c,\\
            1, &\textup{if }\chi\in\widehat{G}\setminus M,
        \end{array}
        \right.
$$
and
$$
g^{-1}(U_c)=\left\{
\begin{array}{ll}
  1+2\sqrt{n},& \textup{if }g= 1_G,\\
  1-2\sqrt{n},& \textup{if }g\in H,\\
  1,          & \textup{if }g\in G\backslash H,g\neq 1_G.
    \end{array}
    \right.
$$
By the inversion formula, we obtain
$$
H^2=(2\sqn-1)+(2-2\sqn)H+2G,
$$
and
$$
U_c^2=(2\sqn-1)+(2-2\sqn)U_c+2\widehat{G}.
$$
Because the coefficients of $H^2$ are nonnegative, we have $2-2\sqn+2\geq 0$, which gives $\sqn\leq 2$.
If $n=1$, we have $D=\{1_G\}$, for $|D|=k=1$ and $D$ contains the
identity element. But this contradicts to the fact that $\chi(D)$ takes three values.
If $n=4$, then we get $v=16$ and $U_c^2=3+2(\widehat{G}-U_c)$.
Hence for every element of $U_c$, its coefficient in $U_c^2$ is zero.
By the result of Turyn \cite[Theorem 2.4.11]{Pott},
$G$ cannot be cyclic, so must be isomorphic to $\ZZ_8\bigoplus \ZZ_2$ or $\ZZ_4\bigoplus\ZZ_4$.
It is a well known fact that $\widehat{G}\cong G$.
In each case, there are exactly three elements of order $2$ in $\widehat{G}$, and they all belong to $U_c$.
Hence at least one element of order $2$ must has positive coefficient in $U_c^2$. This is a contradiction.

The following is a summary of our discussions.

\begin{lemma}\label{hadamardDS}
Let $D$ be a Hadamard difference set of order n in an abelian group $G$ with three nontrivial character
values $\sqn$, $a$ and $\bar{a}$. If $a+\baa=0$, then $D$ must be
with parameter $(v,k,\lambda)=(2^{2m},2^{2m-1}+2^{m-1},2^{2m-2}+2^{m-1})$
for some nonnegative integer $m$. In particular, $G$ is a $2$-group.
Let $H=D+D^{(-1)}-G$. Then one has that $H$ is a subgroup of $G$, and
$H^{\perp}=\chi_0\cup\{\chi\in G | \chi(D)=\sqn\}.$
\end{lemma}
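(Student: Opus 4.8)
The plan is to feed the general necessary conditions of Section~\ref{section:necessay conditions} through the extra hypotheses that $D$ is Hadamard and $a+\baa=0$. A Hadamard difference set has parameters $(4n,2n\pm\sqn,n\pm\sqn)$, so $v=4n$; combined with $a+\baa=0$, Lemma~\ref{lemma:a is purely imaginary} gives $d=-1$, and then $a\baa=n$ forces $a=\pm i\sqn$. Condition~(5) becomes $1\le 2d_1-a-\baa=2d_1$, so $d_1=1$, i.e., $1_G\in D$ (hence also $1_G\in D^{(-1)}$). The proof then splits into the two parameter families $k=2n+\sqn$ and $k=2n-\sqn$, treated separately; the assertion of the lemma is to be extracted from the first, and the second shown impossible.

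For $k=2n+\sqn$, I would substitute into the formulas of Section~\ref{section:necessay conditions} to get $\Delta=2\sqn$, $R=\sqn$, and $|D\cap D^{(-1)}|=2\sqn$, and put $H=D+D^{(-1)}-G$. Since the three nontrivial values are $\{\sqn,i\sqn,-i\sqn\}$ and $\chi(D^{(-1)})=\overline{\chi(D)}$, one gets $\chi(H)=2\,\mathrm{Re}\,\chi(D)$ for $\chi\neq\chi_0$, so $\chi(H)=2\sqn$ on $M:=U_c\cup\{\chi_0\}$ and $\chi(H)=0$ elsewhere; with $\chi_0(H)=2k-v=2\sqn$ this holds on all of $\widehat G$. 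The inversion formula then yields $H^2=2\sqn\,H$ and $H=H^{(-1)}$. The coefficients $h_g=d_g+d_g'-1$ of $H$ lie in $\{-1,0,1\}$ with $h_{1_G}=1$, so comparing the coefficient of $1_G$ on the two sides of $H^2=2\sqn H$ gives $\sum_g h_g^2=2\sqn=\sum_g h_g$; hence $H$ has no coefficient $-1$, so $H$ is a subset, and $H^2=|H|\,H$ with $1_G\in H=H^{(-1)}$ makes $H$ a subgroup of order $2\sqn$ with $H^{\perp}=M$. Finally, arguing as in the proof of Lemma~\ref{lemma:only one prime divisor}, every nonprincipal $\chi$ trivial on the Sylow $2$-subgroup $G_2$ has odd order, so $\chi(D)$ lies in a cyclotomic field unramified at $2$ and hence cannot equal $a\in Q(\sqrt{-1})$; therefore $\chi(D)=\sqn$ and $G_2^{\perp}\le H^{\perp}=M$. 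Comparing orders, $w=|G_2^{\perp}|$ divides $|M|=2\sqn$, so $w^2\mid v=2^sw$ and $w=1$. Thus $G$ is a $2$-group, $n=2^{s-2}$, $s$ is even, say $s=2m$, and the parameters together with the description of $H$ and $H^{\perp}$ come out as claimed.

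For $k=2n-\sqn$, the same substitutions give $|D\cap D^{(-1)}|=1$, so $D\cap D^{(-1)}=\{1_G\}$, and $|U_c|=1+2\sqn$. Setting $H=G+1_G-D-D^{(-1)}$, its coefficients lie in $\{0,1\}$ (no $g\neq 1_G$ lies in both $D$ and $D^{(-1)}$), so $H$ is a subset with $\chi(H)$ equal to $1+2\sqn$ on $\chi_0$, to $1-2\sqn$ on $U_c$, and to $1$ elsewhere. The inversion formula gives $H^2=(2\sqn-1)1_G+(2-2\sqn)H+2G$ and, by the same computation on $\widehat G$, $U_c^2=(2\sqn-1)\chi_0+(2-2\sqn)U_c+2\widehat G$. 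Nonnegativity of the coefficient of a $g\in H$ in $H^2$ forces $4-2\sqn\ge 0$, so $\sqn\le2$; $n=1$ is impossible, since then $D=\{1_G\}$ has a single character value, leaving $n=4$ and $v=16$. Here $G$ is a $2$-group of order $16$, not cyclic by Turyn's theorem~\cite[Theorem 2.4.11]{Pott}; any $\chi$ with $\chi^2=\chi_0$ satisfies $\chi(D)\in\ZZ$ and $\chi(D)^2=n=4$, hence $\chi(D)=\pm2$, and since the three values are $2,2i,-2i$ we must have $\chi(D)=\sqn$, so all involutions of $\widehat G$ lie in $U_c$. As $|U_c|=5$, the groups $\ZZ_4\oplus\ZZ_2^2$ and $\ZZ_2^4$ (with $7$ and $15$ involutions) are ruled out, leaving $\ZZ_8\oplus\ZZ_2$ and $\ZZ_4\oplus\ZZ_4$; in both, the three order-$2$ characters together with $\chi_0$ form a Klein four-group, so some involution $\chi_3\in U_c$ is a product of two distinct elements of $U_c$, whence the coefficient of $\chi_3$ in $U_c^2$ is at least $2$, contradicting the computed value $4-2\sqn=0$. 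So this case does not occur, and the lemma follows.

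The substitutions into the Section~\ref{section:necessay conditions} formulas and the character bookkeeping are routine. I expect the main obstacle to be the elimination of the second parameter family: squeezing the sharp bound $\sqn\le2$ out of the nonnegativity of group-ring coefficients, and then disposing of the residual order-$16$ case, for which Turyn's nonexistence result, a count of the involutions in $\widehat G$, and the Schur-ring identity for $U_c^2$ have to be used in combination.
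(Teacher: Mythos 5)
Your proposal is correct and follows essentially the same route as the paper: reduce to $a=\pm i\sqn$ and $d_1=1$, handle $k=2n+\sqn$ by showing $H=D+D^{(-1)}-G$ is a subgroup with $H^{\perp}=M$ and then forcing $w=1$ via $G_2^{\perp}\le M$, and eliminate $k=2n-\sqn$ through the identities for $H^2$ and $U_c^2$, Turyn's theorem, and the involution count in the order-$16$ case. Your two small deviations are improvements in exposition rather than substance: you verify that $H$ is a subset via $\sum_g h_g^2=\sum_g h_g$ instead of the paper's direct count of $+1$ coefficients, and you explicitly rule out $\ZZ_4\oplus\ZZ_2^2$ and $\ZZ_2^4$ by comparing the number of involutions with $|U_c|=5$, a step the paper leaves implicit.
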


\subsection{$G$ is a $p$-group}
Recall that $w>1$ if $p$ is odd, hence we must have $p=2$.
By a result of Menon \cite{Menon}, the plausible difference set $D$ is with
parameter $(v,k,\lambda)=(4n,2n\pm\sqn,n\pm\sqn)$. Write $v=4n=2^s$ for some nonnegative integer $s$.
Since $\Delta|v$, we can assume $\Delta=2\sqn-a-\baa=2^u$
with $u$ being a nonnegative integer.
From $\Delta=2\sqn-a-\baa<4\sqn$ and $-a-\baa\geq 1-2d_1\geq -1$,
we get $2^{s/2}-1\leq 2^u<2^{s/2+1}.$
Consequently we find $2^u=2^{s/2}=2\sqrt{n}$, implying $a+\baa=0$.
This implies that $D$ satisfies the conditions of Lemma~\ref{hadamardDS}.
Hence $D$ must be
with parameter $(v,k,\lambda)=(2^{2m},2^{2m-1}+2^{m-1},2^{2m-2}+2^{m-1})$
for some nonnegative integer $m$. Let $H=D+D^{(-1)}-G$. Then one has that
$H$ is a subgroup of $G$, and $H^{\perp}=\{\chi_0\}\cup\{\chi\in \widehat{G} | \chi(D)=\sqn\}.$

Let us further assume that the exponent of $G$ is $4$. From
$$
\sqn+\frac{k-\sqn}{2^{rk_2(G)}}\leq 2^{s-rk_2(G)},
$$
we get $m\geq rk_2(G)$. By the definition of $rk_2(G)$, we can verify that
$m=rk_2(G)$, and $G\cong \ZZ_4^{m}$. Since $a=\pm i\sqn$, we find that
each $\chi \in U_a\cup U_a^{(-1)}$ has order $4$. Notice that
$\widehat{G}\cong \ZZ_4^{m}$ has $2^{m}-1=2\sqn-1$ elements of order $2$
and $|U_c|=2\sqn-1$, therefore $U_c$ contains exactly all the characters of order $2$.
In other words, $H^{\perp}=\{\chi_0\}\cup\{\chi\in \widehat{G} | \chi(D)=\sqn\}$ is the unique
maximal elementary abelian subgroup in $\widehat{G}$.
Such difference sets have been constructed by Davis and Polhill \cite{DP}.\\

\subsection{$U_c\cup\{\chi_0\}$ is a subgroup of $\widehat{G}$}
Let $M=U_c\cup\{\chi_0\}.$
Since $M$ is a subgroup, we see that $\psi(M)$ can only take two character values $0$ and $|M|$,
for each nonprincipal character $\psi$ of $\widehat{G}$. Together with the fact
$$
\psi(M)\in\bigg\{-2R,\frac{v}{\Delta}-2R, \frac{2v}{\Delta}-2R\bigg\}
$$
as listed in Table \ref{tab:1}, we readily verify
$$
\frac{v}{\Delta}-2R=0,\quad \frac{2v}{\Delta}-2R=|M|,
$$
since $-2R<0$ and $\frac{v}{\Delta}-2R<\frac{2v}{\Delta}-2R$.
We recall that $R=\frac{k-\sqrt{n}}{\Delta}$, therefore $v=2(k-\sqn)$ and $|M|=\frac{v}{\Delta}$.
On the other hand, we have $k^2=n+(k-n)v$. It now follows $v=4n$, $k=2n+\sqn$ and $\lambda=n+\sqn$,
i.e., $D$ is with parameter $(v,k,\lambda)=(4n,2n+\sqn,n+\sqn)$.
Let $H=D+D^{(-1)}-G$. Then we have
$$
\chi(H)=\left\{
        \begin{array}{cc}
            2\sqn,& \textup{if } \chi\in M;\\
            a+\baa,& \textup{if } \chi\in\widehat{G}\setminus M.
        \end{array}
        \right.
$$
From the inversion formula, we find $H=a+\baa+M^{\perp}.$
Comparing the coefficients of $1_G$ on each side, it now follows
$$
a+\baa=\left\{
        \begin{array}{cc}
                0, & \textup{if } d_1=1;\\
                -2,& \textup{if } d_1=0.
        \end{array}
        \right.
$$
If $a+\baa=-2,$ then we have $\Delta=2\sqn-a-\baa=2\sqn+2$. From $\Delta|(k-\sqn)$, we have
$(\sqn+1)|n$, which is false. Hence $a+\baa=0$.
Therefore $D$ also satisfies the conditions of Lemma~\ref{hadamardDS}.

Now we see that in either of the above two special cases,
$D$ satisfies the conditions of Lemma~\ref{hadamardDS}.
We state the following as a summary of this section.

\begin{theorem}\label{threeCasesOneResult}
Let $D$ be a difference set in an abelian group $G$ with three nontrivial character
values $\sqn$, $a$ and $\bar{a}$. Denote $M=\{\chi_0\}\cup\{\chi\in\widehat{G}\,|\,\chi(D)=\sqn\}$.
If
\begin{enumerate}
\item[(1)] $D$ is a Hadamard difference set with $a+\bar{a}=0$, or
\item[(2)] $G$ is a $p$-group, or
\item[(3)] $M$ is a subgroup of $\widehat{G}$,
\end{enumerate}
then $D$ is with parameter $(v,k,\lambda)=(2^{2m},2^{2m-1}+2^{m-1},2^{2m-2}+2^{m-1})$
for some nonnegative integer $m$. In particular, $G$ is a $2$-group.
Let $H=D+D^{(-1)}-G$. Then one has that $H$ is a subgroup of $G$ and $H^{\perp}=M.$
\end{theorem}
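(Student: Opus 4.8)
The plan is to prove Theorem~\ref{threeCasesOneResult} by showing that each of the three hypotheses (1), (2), (3) reduces to the single normalized scenario already analyzed, namely a Hadamard difference set with parameters $(4n,2n+\sqn,n+\sqn)$ and $a+\baa=0$, and then invoking Lemma~\ref{hadamardDS} to conclude. Most of the work is already done in the preceding subsections; the theorem is essentially a packaging statement, so the proof will consist of three short reductions plus a citation of Lemma~\ref{hadamardDS}.

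\begin{proof}
We treat the three cases in turn, reducing each to the hypotheses of Lemma~\ref{hadamardDS}.

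\textbf{Case (1).} This is exactly the setting of Lemma~\ref{hadamardDS}: we are given that $D$ is a Hadamard difference set with three nontrivial character values $\sqn$, $a$, $\bar{a}$ and $a+\baa=0$. Lemma~\ref{lemma:a is purely imaginary} gives $d=-1$ and $a=\pm i\sqn$, and necessary condition (5), $1\le 2d_1-a-\baa=2d_1$, forces $d_1=1$. One then checks, as in the first subsection of Section~\ref{section:special cases}, that the parameter $(v,k,\lambda)=(4n,2n-\sqn,n-\sqn)$ is impossible: the element $H=G+1-D-D^{(-1)}$ would be a subset with $H^2=(2\sqn-1)+(2-2\sqn)H+2G$, forcing $\sqn\le2$, and the residual cases $n=1$ and $n=16$ are eliminated by hand (for $n=16$, using Turyn's nonexistence result for cyclic Sylow $2$-subgroups, cf.~\cite[Theorem 2.4.11]{Pott}, and counting elements of order $2$ in $\widehat{G}\cong\ZZ_8\oplus\ZZ_2$ or $\ZZ_4\oplus\ZZ_4$). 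Hence $D$ has parameter $(4n,2n+\sqn,n+\sqn)$, and Lemma~\ref{hadamardDS} applies directly: $G$ is a $2$-group of square order, $v=2^{2m}$ with the stated $k,\lambda$, $H=D+D^{(-1)}-G$ is a subgroup, and $H^{\perp}=M$.

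\textbf{Case (2).} Suppose $G$ is a $p$-group. Since $w>1$ whenever $p$ is odd (a $p$-group with $p$ odd is ruled out by the Remark following condition (5), because $(2u+\alpha)\mid w$ with $2u+\alpha\ge3$), we must have $p=2$. By Menon's theorem \cite{Menon} the only plausible parameters for a difference set in a $2$-group are $(4n,2n\pm\sqn,n\pm\sqn)$; write $v=4n=2^s$. From $\Delta\mid v$ write $\Delta=2\sqn-a-\baa=2^u$. Combining $\Delta<4\sqn$ with $-a-\baa\ge1-2d_1\ge-1$ gives $2^{s/2}-1\le2^u<2^{s/2+1}$, so $2^u=2^{s/2}=2\sqn$, whence $a+\baa=0$. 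Thus $D$ is a Hadamard difference set with $a+\baa=0$ and we are back in Case (1); Lemma~\ref{hadamardDS} yields the conclusion.

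\textbf{Case (3).} Suppose $M=U_c\cup\{\chi_0\}$ is a subgroup of $\widehat{G}$. Then for each nonprincipal character $\psi$ of $\widehat{G}$, the value $\psi(M)$ is $0$ or $|M|$; matching this against the three possible values $-2R,\ \frac{v}{\Delta}-2R,\ \frac{2v}{\Delta}-2R$ of $\psi(M)$ read off from Table~\ref{tab:1}, and using $-2R<0<\frac{v}{\Delta}-2R<\frac{2v}{\Delta}-2R$, forces $\frac{v}{\Delta}-2R=0$ and $\frac{2v}{\Delta}-2R=|M|$. With $R=(k-\sqn)/\Delta$ this gives $v=2(k-\sqn)$, and combining with $k^2=n+\lambda v=n+(k-n)v$ yields $v=4n$, $k=2n+\sqn$, $\lambda=n+\sqn$. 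Setting $H=D+D^{(-1)}-G$, a direct computation of $\chi(H)$ on $M$ and on $\widehat{G}\setminus M$ followed by the inversion formula gives $H=(a+\baa)+M^{\perp}$; comparing the coefficient of $1_G$ forces $a+\baa=0$ if $d_1=1$ and $a+\baa=-2$ if $d_1=0$. The latter is impossible: it would make $\Delta=2\sqn+2$, and $\Delta\mid(k-\sqn)=2n$ would force $(\sqn+1)\mid n$, which is absurd. Hence $a+\baa=0$, and again $D$ satisfies the hypotheses of Case (1), so Lemma~\ref{hadamardDS} finishes the proof.

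In all three cases $D$ has parameter $(2^{2m},2^{2m-1}+2^{m-1},2^{2m-2}+2^{m-1})$, $G$ is a $2$-group, $H=D+D^{(-1)}-G$ is a subgroup of $G$, and $H^{\perp}=M$.
\qed
\end{proof}

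The only genuine obstacle is that each reduction secretly depends on the full force of the earlier subsections — in particular the elimination of the parameter $(4n,2n-\sqn,n-\sqn)$ in Case (1), which requires the $n=16$ sporadic argument using Turyn's theorem and the group $\widehat{G}$ having exactly three involutions. Everything else is bookkeeping: reading the correct entries from Table~\ref{tab:1}, applying the Fourier inversion formula to identify $H$ as a (sub)group, and chaining the divisibility facts $\Delta\mid v$, $\Delta\mid(k-\sqn)$. I would take care to state clearly that Case (2) and Case (3) do not need their own structural analysis of $H$ — they only need to establish $a+\baa=0$ and the Hadamard parameters, after which Lemma~\ref{hadamardDS} is a black box.
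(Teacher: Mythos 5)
Your proposal is correct and follows essentially the same route as the paper: Section~\ref{section:special cases} carries out exactly these three reductions (Menon's theorem plus the $\Delta=2^u$ computation for the $p$-group case, and the analysis of the values $\psi(M)$ for the subgroup case) and then invokes Lemma~\ref{hadamardDS}, which is itself the paper's summary of the Hadamard case including the elimination of the parameter $(4n,2n-\sqn,n-\sqn)$. One small slip: after deducing $\sqn\le 2$ the residual cases are $n=1$ and $n=4$ (the latter giving $v=16$), not $n=16$; the argument you then describe (groups of order $16$ with exactly three involutions, together with Turyn's theorem) is the correct one for $n=4$.
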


\section{Conclusion}\label{section:conclusion}

In this paper, we make an attempt to find difference sets without the character divisibility property. Under the assumption that the difference sets have only three distinct nontrivial character values, we have derived some restrictions on the parameters. It turns out that their character values all lie in the field $Q(\sqrt{-d})$, where $d=1$, $d=2$ or $d$ is an odd  prime congruent to $3$ modulo $4$.  We have conducted a computer search for plausible parameters satisfying all these conditions. When $d=1$ or $2$, we have found some plausible parameter sets satisfying all our conditions, some of which are listed in  Examples~\ref{example:4.1}-\ref{example:5.1}. It is an interesting open problem to rule out these parameter sets or construct a difference set with these parameters. An affirmative answer to this problem will provide difference sets
without the character divisibility property. On the other hand, we have not found any parameter
set satisfying all the derived conditions when  $d$ is an odd prime. 

At last, we take an initial step towards
the case of $|X|=4$. Similarly we introduce the following
sets
$$
U_z =\{ \chi\in \widehat{G} \,|\, \chi(D)=z\}
$$
for each $z\in\{a,b,c,d\}$. These four sets will form a partition of $\widehat{G}\setminus\{\chi_0\}$.
According to the lengths of orbits of $Gal(Q(\xi_m)\slash Q)$ acting on
$a,b,c,d$, we may divide it into the following four cases:
\begin{enumerate}[(1)]
\item
The Galois group $Gal(Q(\xi_m)\slash Q)$ acts transitively. For each prime $p\,|v$,
there exists a character $\chi$ with ord$(\chi)=p$, so $\chi(D)\in Q(\xi_p)$. From the assumption that
$Gal(Q(\xi_m)\slash Q)$ acts transitively on the set $\{a,b,c,d\}$, we have
$\{a,b,c,d\}\subseteq Q(\xi_p)$. If $v$ has another prime divisor $q$ different from $p$,
then there exists a character $\psi$ such that ord$(\psi)=q$, it follows as above that
$\{a,b,c,d\}\subseteq Q(\xi_q)$. Since $Q(\xi_p)\cap Q(\xi_q)=Q$, we have
$\{a,b,c,d\}\subseteq Q$, which is a contradiction. Therefore, the order
of $G$ must be a prime power, i.e., $G$ is $p$-group.\\

\item
There is one fixed point $\{a\}$, and an orbit of length three $\{b,c,d\}$.
Then $a\in Q$. From $a\baa=n$, we have $a=\sqrt{n}$ or $-\sqrt{n}$. Since
$\{\sigma_{-1}(b),\sigma_{-1}(c),\sigma_{-1}(d)\}=\{b,c,d\}$,
we may assume that $c=\bar{b}$ without loss of generality.
Then $\sigma_{-1}(d)=d$, that is $d$ is a real number.
In addition, $d\bar{d}=n$, therefore $d= -a$.
Then $d$ will also be fixed by $Gal(Q(\xi_m)\slash Q)$, contradicting to our assumption.\\

\item
 There are two fixed points $\{a\}$, $\{b\}$ and an orbit of length two $\{c,d\}$. By
the similar analysis as in Case (2), we have $a=\pm\sqrt{n},$ $b=-a,$ and
$d=\bar{c}$.\\

\item
 There are two orbits of length two $\{a,b\}$ and $\{c,d\}$. Then $b=\bar{a}$ and
$d=\bar{c}$.\\
\end{enumerate}

We leave this problem for our future considerations.

\begin{acknowledgements}
The authors are grateful to the anonymous reviewers for their detailed suggestions
and comments that improved the presentation and quality of this paper.
T. Feng was supported in part by Fundamental Research Fund for the Central Universities
of China, Zhejiang Provincial Natural Science Foundation under Grant
LQ12A01019, in part by the National Natural Science Foundation of China
under Grant 11201418, and in part by the Research Fund for Doctoral Programs
from the Ministry of Education of China under Grant 20120101120089.
S. Hu was supported by the Scholarship Award for Excellent Doctoral Student
granted by Ministry of Education.
G. Ge was supported by the National Natural Science Foundation of China under
Grant 61171198.
\end{acknowledgements}


%
%

\end{document}